\documentclass[reqno]{amsart}
\usepackage{hyperref}

\usepackage[]{graphicx}

\usepackage{amsthm}
\usepackage{amsmath}
\usepackage{latexsym}
\usepackage{amsfonts}
\usepackage{amssymb}

\newcommand{\CC}{{\mathbb C}}
\newcommand{\RR}{{\mathbb R}}
\newcommand{\NN}{{\mathbb N}}

\newcommand{\DD}{{\mathbb D}}
\newcommand{\JJ}{{\mathbb J}}

 \newcommand{\ro}{\varrho}
\newcommand{\e}{\varepsilon}
\newcommand{\be}{\beta}
\newcommand{\al}{\alpha}
 \newcommand{\De}{\Delta}
\newcommand{\la}{\lambda}
\newcommand{\g}{\gamma}
\newcommand{\de}{\delta}
\newcommand{\ph}{\varphi}
\newcommand{\Ga}{\Gamma}
\newcommand{\Om}{\Omega}

\newcommand{\bin}[2]{
  \left(
     \begin{array}{@{}c@{}}
         #1 \\ #2
      \end{array}
   \right)}

\newcommand{\Si}{\Sigma}
\newcommand{\CMF}{\mathcal{CMF}}
\newcommand{\BBF}{\mathcal{BF}}

\begin{document}

\title[Completely monotone functions...]
{Completely monotone functions and some classes of fractional evolution equations}

\author
{Emilia Bazhlekova}

\date{February 12, 2015}

\address{
Institute of Mathematics and
Informatics, Bulgarian Academy of Sciences}

\email{e.bazhlekova@math.bas.bg}

\maketitle

\begin{abstract}
			The abstract Cauchy problem for the distributed order fractional evolution equation in the Caputo and in the Riemann-Liouville sense is studied for operators generating a strongly continuous one-parameter semigroup on a Banach space. Continuous as well as discrete distribution of fractional time-derivatives of order less than one are considered.  
		The problem is reformulated as an abstract Volterra integral equation. It is proven that its kernel satisfy certain complete monotonicity properties. Based on these properties, the well-posedness of the problem is established and a series expansion of the solution is obtained. In case of ordered Banach space this representation implies positivity of the solution operator. In addition, a subordination formula is obtained. 
		
\bigskip
\noindent Keywords: completely monotone function, Bernstein function, fractional evolution equation, distributed order fractional derivative,  Mittag-Leffler function \\

 \end{abstract}

\numberwithin{equation}{section}
\newtheorem{theorem}{Theorem}[section]
\newtheorem{lemma}[theorem]{Lemma}
\newtheorem{example}[theorem]{Example}
\newtheorem{corollary}[theorem]{Corollary}
\newtheorem{prop}[theorem]{Proposition}
\newtheorem{definition}[theorem]{Definition}
\newtheorem{remark}[theorem]{Remark}

\section{Introduction}
The evolution equations of fractional order are extensively used for modeling of materials and processes with memory. 
Recently, in the attempt to find more adequate models, equations involving discrete or continuous distribution of fractional derivatives are introduced. 

In this paper, we consider the fractional evolution equation of distributed order in the following two alternative forms:
\begin{equation}\label{C}
\int_0^1 \mu(\be) ^C\! D_t^\be  u(t)\,d\be=Au(t), \ \ t>0,
\end{equation}
and
\begin{equation}\label{RL}
u'(t)=\int_0^1 \mu(\be) D_t^{\be}A u(t)\,d\be, \ \ t>0,
\end{equation} 
where $^C\! D_t^\be$ and $ D_t^\be$ are the fractional time-derivatives in the Caputo and in the Riemann-Liouville sense, respectively, and
 $A$ is a closed linear unbounded operator densely defined in a Banach space $X$. The initial condition $ u(0)=a\in X$ is prescribed. For the weight function $\mu$ two cases are considered: 
\begin{itemize}
\item discrete distribution
\begin{equation}\label{case1}
\mu(\be)=\de(\be-\al)+\sum_{j=1}^m b_j \de(\be-\al_j),
\end{equation}
 where $ 1> \al>\al_1...>\al_m> 0, \ b_j> 0, \ j=1,...,m,\ m\ge 0$,  and $\de$ is the Dirac delta function; 
\item  continuous distribution
\begin{equation}\label{case2}
\mu\in C[0,1], \ \mu(\be)\ge 0,\ \be\in[0,1],
\end{equation}
and $\mu(\be)\neq 0$ on a set of a positive measure.
\end{itemize}

In the case of discrete distribution, equations (\ref{C}) and (\ref{RL}) are reduced to the multi-term time-fractional equations
\begin{equation}\label{multitermC}
^C\! D_t^{\al}u(t)+\sum_{j=1}^m {b_j}\, {^C\!} D_t^{\al_j}u(t)=Au(t), \  \  t>0,
\end{equation} 
and 
\begin{equation}\label{multitermRL}
u'(t)=D_t^{\al}Au(t)+\sum_{j=1}^m {b_j}\,  D_t^{\al_j}Au(t),\  \  t>0,
\end{equation} 
respectively.   
Note that if $m=0$ (single-term fractional evolution equation) 
problem (\ref{multitermC}) is equivalent to (\ref{multitermRL}) with $\al$ replaced by $1-\al$. However, in general, similar equivalence does not hold for equations 
(\ref{C}) and (\ref{RL}).

Problems (\ref{C}) and (\ref{RL}) have found numerous applications in the modeling of anomalous relaxation and diffusion phenomena, see e.g. 
\cite{Chechkin, MainardiRel, MainardiDiff, Sokolov}. This leads 
to a growing interest to such problems in the last years.  
Concerning the equation in the Caputo sense (\ref{C}), we refer to \cite{VarshaJMAA, Burrage, Jin, KosticFCAA, Yamamoto, LuchkoMultiterm} for the study of its multi-term variant (\ref{multitermC}) and to \cite{Chechkin, Mirjana, K, LuchkoFCAA, Sokolov} for the case of continuous distribution.  
Concerning the equation in the Riemann-Liouville sense (\ref{RL}), two- and three-term variants of problem (\ref{multitermRL}) are considered in \cite{
Guo, Langlands, Pagnini} and the continuous distribution case of (\ref{RL}) is discussed in \cite{MainardiRel, MainardiDiff, Sokolov}. Although problem (\ref{C}) is already studied for various classes of operators $A$: the Laplace operator in different settings \cite{K, Jin}, the Riesz space-fractional derivative \cite{Burrage}, second order symmetric uniformly elliptic operators \cite{K, Yamamoto, LuchkoFCAA, LuchkoMultiterm}, general operators in a Banach space setting \cite{KosticFCAA}, etc., to the best of the author's knowledge, the mathematical study of problem (\ref{RL}) is still quite limited. 

In this paper, it is assumed that the operator $A$ is a generator of a $C_0$-semigroup (see e.g. \cite{Engel}), i.e. that the classical abstract Cauchy problem 
\begin{equation}\label{classical}
 u'(t)=Au(t),\ \ t>0;\ \ \ \ u(0)=a\in X,
\end{equation}
is well-posed. Reformulating  problems (\ref{C}) and (\ref{RL}) as abstract Volterra integral equations, 
we propose a unified approach to their study. 
We prove that the scalar kernels 
of the corresponding integral equations have certain complete monotonicity properties 
 and derive useful consequences for the original equations (\ref{C}) and (\ref{RL}) based mainly on these properties. 

Recall that a function $f:(0,\infty)\to\RR$ is called completely monotone if it is of class $C^\infty$ and
\begin{equation*}
(-1)^n f^{(n)}(t)\ge 0, \mbox{\ for\ all\ } t> 0, \ n=0,1,...
\end{equation*} 
Completely monotone functions appear naturally in the models of relaxation and diffusion processes. Such are the exponential and the Mittag-Leffler functions of negative argument, which are obtained as solutions of the classical and the single-term fractional relaxation equations. 
  For details and references on the complete monotonicity property see e.g. \cite{MillerSamko} and \cite{Pruss}, Chapter 4. Concerning Mittag-Leffler-type functions we refer to \cite{MillerITSF, KST, bookML}.
	
The present paper is organized as follows. Section 2 contains preliminaries. In Section 3 problems (\ref{C}) and (\ref{RL}) are rewritten in equivalent form as abstract Volterra integral equations and the properties of their scalar kernels are studied. 
In Section 4 the well-posedness of the problems is proven  and 
series expansion of the solutions is obtained in terms of the resolvent operator of $A$. 
In addition, the positivity of the solution operators is discussed.  In Section 5, a subordination formula is obtained. 




\section{Preliminaries}  
 The sets of positive integers, real and complex numbers are denoted as usual by ${\mathbb N},\ {\mathbb R},$ and $ {\mathbb C}$, respectively, and ${\mathbb N}_0={\mathbb N}\cup \{0\}$, $\RR_+=[0,\infty)$.  
Denote by $\Sigma_\theta$ the sector  
\begin{equation*}
  \Sigma_\theta:=\{s\in\mathbb{C};\ s\neq 0, |\arg s|<\theta\}.
	\end{equation*}
	Denote by $\ast$ the Laplace convolution:
$$(f_1\ast f_2)(t)=\int_0^t f_1(t-\tau)f_2(\tau)\, d\tau.$$

Let $J_t^\g$ be the fractional Riemann-Liouville integral $$ J_t^\g f(t)=\frac{1}{\Gamma(\g)}\int_0^t (t-\tau)^{\g-1}f(\tau)\,d\tau=\left\{\frac{t^{\g-1}}{\Gamma(\g)}\right\}\ast f(t),\ \ \g>0,$$
where $\Ga(\cdot)$ is the Gamma function. 

The Caputo and the Riemann-Liouville fractional derivatives of order $\be\in[0,1]$, $^C\! D_t^\be$ and $ D_t^\be$, 
are defined by $^C\! D_t^0= { D_t^0}=I$, $^C\! D_t^1= { D_t^1}=d/dt$, and
\begin{equation*}
 ^C\! D_t^\be =  J_t^{1-\be}  {D_t^1},\ \   D_t^\be = {D_t^1} J_t^{1-\be},\ \ \ \be\in(0,1).
\end{equation*} 
Application of the Laplace transform  $$\mathcal{L}\{f(t)\}(s)=\widehat{f}(s)=\int_0^\infty e^{-st} f(t)\, dt$$
 to the operators of fractional integration and differentiation gives \cite{KST}
\begin{eqnarray}
&&\mathcal{L}\{J_t^\g f\}(s)=s^{-\g} \widehat{f}(s),\ \ \g>0,\nonumber\\
&& \mathcal{L}\{{^C\! D}_t^\be f\}(s)=s^\be \widehat{f}(s)-s^{\be-1}f(0+),\ \ \be\in(0,1),\label{Lap}\\
&&\mathcal{L}\{D_t^\be f\}(s)=s^\be \widehat{f}(s)-(J_t^{1-\be} f)(0+),\ \ \be\in(0,1).\nonumber  
\end{eqnarray}
Denote as usual by $E_{\al,\be}(\cdot)$ the Mittag-Leffler function 
\begin{equation*}
E_{\al,\be}(z)=\sum_{k=0}^{\infty}\frac{z^k}{\Ga(\al k+\be)},
\ \ \al,\be, z\in \CC,\ \Re \al>0.
\end{equation*}
Recall the Laplace transform pairs ($\Re\al>0, \ \mu\in\RR, \ t>0$)
\begin{equation}\label{pairs}
\mathcal{L}\left\{\frac{t^{\al-1}}{\Ga(\al)}\right\}={s^{-\al}},\ \ \ \ \mathcal{L}\left\{t^{\be-1}E_{\al,\be}(-\mu t^\al)\right\}=\frac{s^{\al-\be}}{s^\al+\mu}.
\end{equation}


The characterization of completely monotone functions is given by the Bernstein's theorem (see e.g. \cite{Feller}) which states that 
a function $f:(0,\infty)\to\RR$ is completely monotone if and only if it can be represented as the Laplace transform of a nonnegative measure.

A $C^\infty$ function $f:(0,\infty)\to\RR$ is called a Bernstein function if it is nonnegative and its first derivative $f'(t)$ is a completely monotone function. 

The classes of completely monotone functions and Bernstein functions will be denoted by $\CMF$ and $\BBF$.
Some properties of these classes of functions, which will be used further, are summarized in the next proposition.

\begin{prop} The following properties are satisfied:\\
(a) The class $\CMF$ is closed under pointwise addition and multiplication;\\
(b) If $f\in\CMF$ and $\ph\in\BBF$, then the composite function $f( \ph)\in\CMF$;\\
(c) If $\ph\in\BBF$, then $\ph(s)/s\in\CMF$;\\
(d) Let $f\in L^1_{loc}(\RR_+)$ be a nonnegative and nonincreasing function, such that $\lim_{t\to +\infty}f(t)=0$. Then $\ph(s)=s\widehat {f}(s)\in\BBF$;\\
(e) If $f\in L^1_{loc}(\RR_+)$ and $f$ is completely monotone, then $\widehat{f}(s)$ admits analytic extension to $\Sigma_\pi$ and
$|\arg\widehat{f}(s)|\le |\arg s|,\ \ s\in \Sigma_\pi$.
\end{prop}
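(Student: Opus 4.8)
All five assertions are classical consequences of Bernstein's theorem recalled above, and my plan is to deduce them from that representation together with the Leibniz and Fa\`a di Bruno differentiation formulas. Part (a) is immediate: for the sum $(-1)^n(f+g)^{(n)}=(-1)^nf^{(n)}+(-1)^ng^{(n)}\ge 0$, and for the product the Leibniz rule gives $(-1)^n(fg)^{(n)}=\sum_{k=0}^n\binom nk\big((-1)^kf^{(k)}\big)\big((-1)^{n-k}g^{(n-k)}\big)$, a sum of nonnegative terms; equivalently, if $f,g$ are Laplace transforms of nonnegative measures $\mu,\nu$, then $f+g$ and $fg$ are the transforms of $\mu+\nu$ and of the convolution $\mu\ast\nu$.

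For (b) I would apply the Fa\`a di Bruno formula $\frac{d^n}{ds^n}f(\ph(s))=\sum_{\pi}f^{(|\pi|)}(\ph(s))\prod_{B\in\pi}\ph^{(|B|)}(s)$, the sum running over all partitions $\pi$ of $\{1,\dots,n\}$, with $|\pi|$ the number of blocks and $|B|$ the size of a block. Since a completely monotone function is nonnegative we have $\ph'\ge 0$, so $\ph$ is nondecreasing and maps $(0,\infty)$ into $[0,\infty)$; disposing of the trivial constant case, we may assume $\ph(s)>0$. Then $f^{(|\pi|)}(\ph(s))$ has sign $(-1)^{|\pi|}$, and since $\ph^{(|B|)}=(\ph')^{(|B|-1)}$ with $\ph'\in\CMF$, the factor $\ph^{(|B|)}(s)$ has sign $(-1)^{|B|-1}$; hence each summand has sign $(-1)^{|\pi|}(-1)^{\sum_B(|B|-1)}=(-1)^{|\pi|}(-1)^{n-|\pi|}=(-1)^n$, and summing gives $(-1)^n(f\circ\ph)^{(n)}\ge 0$.

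Part (c) I would settle by induction on $n$ using the identity $s\psi(s)=\ph(s)$ for $\psi:=\ph/s$: differentiating and keeping only the two surviving Leibniz terms gives $s\psi^{(n)}(s)+n\psi^{(n-1)}(s)=\ph^{(n)}(s)$, hence $(-1)^n\psi^{(n)}(s)=\frac1s\big((-1)^n\ph^{(n)}(s)+n(-1)^{n-1}\psi^{(n-1)}(s)\big)$, where the first term is $\ge 0$ because $\ph'\in\CMF$ and the second is $\ge 0$ by the inductive hypothesis, the base case being $\psi=\ph/s\ge 0$. For (d), the nonincreasing function $f$ determines a nonnegative Stieltjes measure $\ro:=-df$ on $(0,\infty)$; since $f(t)\to 0$ one has $f(t)=\ro((t,\infty))$, and Fubini's theorem then yields $\ph(s)=s\widehat f(s)=\int_0^\infty(1-e^{-s\tau})\,d\ro(\tau)$, the L\'evy--Khintchine-type representation of a Bernstein function, the requisite integrability $\int_0^1\tau\,d\ro(\tau)\le\int_0^1 f(t)\,dt<\infty$ and $\ro((1,\infty))=f(1)<\infty$ being precisely what $f\in L^1_{loc}$ provides. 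From this one reads off $\ph\ge 0$ and $\ph'(s)=\int_0^\infty e^{-s\tau}\,\tau\,d\ro(\tau)$, the Laplace transform of the nonnegative measure $\tau\,d\ro(\tau)$, hence completely monotone, so $\ph\in\BBF$.

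For (e), Bernstein's theorem gives $f(t)=\int_{[0,\infty)}e^{-t\tau}\,d\sigma(\tau)$ for a nonnegative measure $\sigma$, and interchanging integrals yields $\widehat f(s)=\int_0^\infty(s+\tau)^{-1}\,d\sigma(\tau)$ for $s>0$; here $f\in L^1_{loc}$ forces $\sigma([0,1])<\infty$ and $\int_{(1,\infty)}\tau^{-1}\,d\sigma(\tau)<\infty$. I would then note that for $s\in\Sigma_\pi$ the denominator $s+\tau$ never vanishes and $|s+\tau|$ is comparable to $\tau$ for large $\tau$, so the last integral converges locally uniformly in $s$ and furnishes the analytic extension of $\widehat f$ to $\Sigma_\pi$. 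The bound $|\arg\widehat f(s)|\le|\arg s|$ I would obtain from elementary plane geometry: for $\tau\ge 0$ the point $s+\tau$ has the same imaginary part as $s$ but larger real part, so $\tau\mapsto\arg(s+\tau)$ is monotone and $\arg(s+\tau)$ lies between $0$ and $\arg s$; consequently each $(s+\tau)^{-1}$ lies in the closed sector $\{w:\ \arg w\ \text{lies between}\ 0\ \text{and}\ -\arg s\}$, which is convex because $|\arg s|<\pi$, and so does the $\sigma$-average $\widehat f(s)$. I expect the only genuinely technical point to be part (e) --- rigorously establishing the convergence and analyticity of $\int_0^\infty(s+\tau)^{-1}\,d\sigma(\tau)$ throughout $\Sigma_\pi$, and the monotonicity of $\tau\mapsto\arg(s+\tau)$ on which the argument inequality rests --- while the other parts are routine once Bernstein's theorem and the Fa\`a di Bruno formula are available.
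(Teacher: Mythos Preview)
Your arguments for (a), (b), (d), and (e) are correct and considerably more detailed than the paper's own treatment, which merely cites the product and chain rules for (a)--(b) and refers to Pr\"uss~\cite{Pruss} for (c)--(e). The Fa\`a di Bruno sign-count for (b), the L\'evy--Khintchine computation for (d), and the Stieltjes-representation route to (e) are all sound.

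However, your inductive proof of (c) contains a sign error that breaks the argument. From $\ph'\in\CMF$ one obtains $(-1)^{k}(\ph')^{(k)}\ge 0$, i.e.\ $(-1)^{n-1}\ph^{(n)}\ge 0$ for $n\ge 1$; hence in your recursion
\[
(-1)^n\psi^{(n)}(s)=\frac1s\Big((-1)^n\ph^{(n)}(s)+n\,(-1)^{n-1}\psi^{(n-1)}(s)\Big)
\]
the first term on the right is $\le 0$, not $\ge 0$. For instance, with $\ph(s)=\sqrt{s}$ and $n=2$ one has $(-1)^2\ph''(s)=-\tfrac14 s^{-3/2}<0$; the identity still holds (the positive second term dominates), but the induction as you wrote it does not close, since you are expressing a nonnegative quantity as the sum of a nonpositive and a nonnegative term. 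A correct direct argument uses either the L\'evy--Khintchine representation $\ph(s)=a+bs+\int_0^\infty(1-e^{-s\tau})\,d\nu(\tau)$, from which
\[
\frac{\ph(s)}{s}=\frac{a}{s}+b+\int_0^\infty\!\int_0^\tau e^{-sr}\,dr\,d\nu(\tau)
\]
is visibly the Laplace transform of a nonnegative measure, or the identity $\ph(s)/s=\ph(0+)/s+\int_0^1\ph'(ts)\,dt$, each integrand $s\mapsto\ph'(ts)$ being completely monotone.
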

\begin{proof}
Properties (a) and (b) follow by application of the product and the chain rule of differentiation (a detailed proof of (b) can be found in \cite{MillerSamko}). For (c) and (d) see the proof of Proposition 4.3 in \cite{Pruss}. For (e) see \cite{Pruss}, Example 2.2.
\end{proof}

Let $X$ be a Banach space with norm $\|.\|$. Let $A$ be a closed linear unbounded operator in $X$ with dense domain $D(A)$, equipped with the graph norm $\|.\|_A$, $\|x\|_A:=\|x\|+\|Ax\|.$
Denote by $\varrho(A)$ the resolvent set of $A$ and by $R(s,A)$ the resolvent operator of $A$: $R(s,A)=(s-A)^{-1}$, $s\in \varrho(A)$.

Next we recall some definitions and basic theorems, given in \cite{Pruss},
  concerning the abstract Volterra integral equation
 \begin{equation}\label{V}
u(t)=a+\int_0^t k(t-\tau)A u(\tau)\, d\tau, \ \ t\ge 0;\ \ \ \  a\in X,
\end{equation}
with a scalar kernel $k\in L^1_{loc} (\RR_+)$. 

A function $u\in C(\RR_+;X)$ is called a strong solution of (\ref{V}) 
if $u\in C(\RR_+;D(A))$ and (\ref{V}) holds on $\RR_+$.

Problem (\ref{V}) is said to be well-posed if for each $a\in D(A)$ there is a unique strong solution $u(t;a)$ of (\ref{V}) and $a_n\in D(A),\ a_n\to 0$ imply $u(t;a_n)\to 0$ in $X$, uniformly on compact intervals.

For a well-posed problem the solution operator $S(t)$ is defined as usual by
$$
S(t)a=u(t;a),\ \ a\in D(A),\ t\ge 0.
$$
In this paper it is assumed that $A$ is a generator of a bounded $C_0$-semigroup $T(t)$, $\|T(t)\|\le M$, $t\ge 0$. The Hille-Yosida theorem (see e.g. \cite{Engel}) then implies that $(0,\infty)\subset\ro(A)$ and
\begin{equation}\label{HY}
\|R(s,A)^n\|\le M/s^n,\ \ s>0,\ n\in\NN.
\end{equation}

Suppose $\int_0^\infty e^{-st}|k(t)|\, dt<\infty$ for $s>0$ and $\widehat{k}(s)\neq 0$, $1/\widehat{k}(s)\in \ro(A)$ for $s>0$.
Then the Laplace transform of the solution operator $S(t)$ of problem (\ref{V}) 
$$
H(s)=\int_0^\infty e^{-st}S(t)\,dt, \ \ \Re s>0
$$
is given by
\begin{equation}\label{H}
H(s)=\frac{g(s)}{s}R(g(s),A),\ \ g(s)=1/\widehat{k}(s).
\end{equation}
The Generation Theorem (\cite{Pruss}, Theorem 1.3) states that problem (\ref{V}) is well-posed with solution operator $S(t)$ satisfying  $\|S(t)\|\le M,\ t\ge 0,$ 
if and only if  
\begin{equation}\label{GenTh}
\|H^{(n)}(s)\|\le M\frac{n!}{s^{n+1}},\ \ \mbox{ for\ all\ } s>0,\ n\in\NN_0.
\end{equation}


\section{Integral reformulation and properties of the kernels}
  
	We reformulate problems (\ref{C}) and (\ref{RL}) as Volterra integral equations of the form (\ref{V}) with appropriate kernels $k(t)$. 
	 By applying (formally) the Laplace transform and, by the use of (\ref{Lap}), it follows for the solution of (\ref{V}) 
\begin{equation}\label{solV}
\widehat{u}(s)=\frac{1}{s}(1-\widehat{k}(s) A)^{-1}a 
\end{equation}
and for the solutions of problems (\ref{C}) and (\ref{RL}), respectively
\begin{equation}\label{sol}
\widehat{u}(s)= \frac{h(s)}{s}\left(h(s)- A\right)^{-1}a,\  \widehat{u}(s)=\frac{1}{s}\left(1-\frac{h(s)}{s} A\right)^{-1}a,
\end{equation}
where 
\begin{equation}\label{h}
h(s)=\int_0^1 \mu(\be)s^\be\, d\be.
\end{equation}
Note that in the discrete distribution case (\ref{case1}) $h(s)$ admits the representation
\begin{equation}\label{hmt}
h(s)=
s^\al+\sum_{j=1}^m b_j s^{\al_j}.
\end{equation}
Comparing (\ref{sol}) to (\ref{solV}), it follows for the kernels $k_1(t)$ and $k_2(t)$, corresponding to problems (\ref{C}) and (\ref{RL}), respectively:
\begin{equation}\label{ker}
\widehat{k_1}(s)=\left(h(s)\right)^{-1},\  \widehat{k_2}(s)=h(s)/s.
\end{equation}
Representation (\ref{H}) suggests that it is convenient to define also the functions: 
$$
g_i(s)=1/\widehat{k_i}(s),\ \ i=1,2,
$$
that is
\begin{equation}\label{g}
g_1(s)=h(s),\ g_2(s)=s/h(s),
\end{equation}
where $h(s)$ is defined in (\ref{h}).
Some useful complete monotonicity properties of the functions $k_i(t)$ and $g_i(s)$, $i=1,2,$ are proven in the next theorem.

\begin{theorem} 
Let $\mu(\be)$ be either of the form (\ref{case1}) or of the form (\ref{case2}) with the additional assumptions $\mu\in C^3[0,1],$ $\mu(1)\neq 0$, and $\mu(0)\neq 0$ or $\mu(\be)=a\be^\nu$ as $\be\to 0$, where $a,\nu>0$.
Then the functions $k_i(t)$ and $g_i(s)$, $i=1,2,$ have the following properties:\\
(a) $k_i \in L^1_{loc}(\RR_+)$ and $\lim_{t\to +\infty}k_i(t)=0$;\\
(b) $k_i(t)\in\CMF$ for $t> 0$; \\
(c) $k_1\ast k_2\equiv 1$;\\
(d) $g_i(s)\in\BBF$ for $s> 0$;\\
(e) $g_i(s)/s\in\CMF$ for $s> 0$;\\
(f) $g_i(s)$ admits analytic extension to $\Sigma_\pi$ and for any $s\in \Sigma_\pi$ $$|\arg g_i(s)|\le |\arg s|.$$ In the multi-term case (\ref{hmt}) 
 a stronger inequality holds: $$|\arg g_i(s)|\le \al |\arg s|,\ s\in \Sigma_\pi.$$
 \end{theorem}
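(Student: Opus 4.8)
The plan is to reduce everything to the single scalar function $h(s)=\int_0^1\mu(\be)s^\be\,d\be$ (a finite sum of powers in the discrete case). The key structural fact is that $s^\be\in\BBF$ for every $\be\in[0,1]$ — and in fact $s^\be$ is a \emph{complete} Bernstein function — so that, as a nonnegative superposition of such functions, $h$ is itself a complete Bernstein function; in particular $h(s)>0$ for $s>0$. Together with the elementary bounds $h(s)\to\infty$ as $s\to\infty$ (since $\mu(1)\neq 0$ forces $h(s)\ge c\int_{1-\de}^1 s^\be\,d\be$) and $h(s)/s\to\infty$ as $s\to0+$ (since $s^{\be-1}\to\infty$ for $\be<1$ while $\mu\not\equiv 0$), this is essentially all that is needed for parts (b)--(e). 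The remaining, more delicate input is the precise asymptotics $h(s)\sim\mu(1)s/\log s$ as $s\to\infty$ and $h(s)\sim\mu(0)/\log(1/s)$ (respectively $h(s)\sim a\Ga(\nu+1)(\log(1/s))^{-\nu-1}$) as $s\to0+$, obtained by a Watson-type argument; this is where the regularity of $\mu$ and its prescribed behaviour at $\be=0$ and $\be=1$ are really used.

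For part (b) the kernel $k_2$ is explicit: $\widehat{k_2}(s)=h(s)/s=\int_0^1\mu(\be)s^{\be-1}\,d\be$ inverts termwise (the interchange being legitimate since $\mu\in C[0,1]$) to $k_2(t)=\int_0^1\frac{\mu(\be)}{\Ga(1-\be)}\,t^{-\be}\,d\be$, a nonnegative superposition of the completely monotone functions $t\mapsto t^{-\be}$, hence $k_2\in\CMF$. For $k_1$ I would argue that $\widehat{k_1}(s)=1/h(s)$, being the reciprocal of a nonzero complete Bernstein function, is a Stieltjes function (Chapter~4 of~\cite{Pruss}); since $1/h(s)\to0$ as $s\to\infty$ and $s/h(s)\to0$ as $s\to0+$, this Stieltjes function has neither a constant term nor a $1/s$-term, so $1/h(s)=\int_0^\infty(s+r)^{-1}\,d\sigma(r)$ for a nonnegative measure $\sigma$, whence $k_1(t)=\int_0^\infty e^{-rt}\,d\sigma(r)\in\CMF$ by Bernstein's theorem; alternatively one writes $1/h(s)=\int_0^\infty e^{-\tau h(s)}\,d\tau$ and notes $e^{-\tau h(s)}\in\CMF$ in $s$ by part~(b) of the Proposition. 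Granting (b), the rest of (a) is cheap: $k_i\in\CMF$ gives $k_i\ge0$ and nonincreasing; $\int_0^T k_i(t)\,dt\le e\int_0^\infty e^{-t/T}k_i(t)\,dt=e\,\widehat{k_i}(1/T)<\infty$ gives $k_i\in L^1_{loc}(\RR_+)$; and $\lim_{t\to\infty}k_i(t)=\lim_{s\to0+}s\widehat{k_i}(s)$, which equals $\lim_{s\to0+}s/h(s)=0$ for $k_1$ and $\lim_{s\to0+}h(s)=0$ for $k_2$.

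Part (c) is immediate: $\widehat{k_1}(s)\widehat{k_2}(s)=h(s)^{-1}\cdot h(s)/s=1/s=\widehat{1}(s)$, so $k_1\ast k_2\equiv1$ by uniqueness of the Laplace transform. For (d): $g_1=h$ is a nonnegative integral superposition of the Bernstein functions $s^\be$, hence $g_1\in\BBF$; and $g_2(s)=s/h(s)=s\,\widehat{k_1}(s)$, so $g_2\in\BBF$ by part~(d) of the Proposition applied to $f=k_1$, which by (a)--(b) is a nonnegative, nonincreasing $L^1_{loc}$ function with limit $0$ at infinity. Part (e) is then just part~(c) of the Proposition applied to $g_1,g_2\in\BBF$.

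For (f): $g_1=h$ is analytic on $\Si_\pi$ because each $s^\be=e^{\be\log s}$ is, and $g_2=s/h$ is analytic there once $h$ is shown to be zero-free on $\Si_\pi$ — which holds since for $\arg s\in(0,\pi)$ every summand has $\arg(s^\be)=\be\arg s\in(0,\arg s]$, so $\mathrm{Im}\,h(s)=\int_0^1\mu(\be)|s|^\be\sin(\be\arg s)\,d\be>0$ (symmetrically for $\arg s\in(-\pi,0)$, and $h>0$ on $(0,\infty)$). For the argument bound, fix $s$ with $0\le\arg s<\pi$: each $s^\be$ lies in the convex cone $\{z\neq0:0\le\arg z\le\arg s\}$, hence so does $h(s)$, giving $0\le\arg g_1(s)\le\arg s$; and $\arg g_2(s)=\arg s-\arg h(s)\in[0,\arg s]$ as well, so $|\arg g_i(s)|\le|\arg s|$. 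In the multi-term case every exponent is $\le\al$, so the $s^{\al_j}$ lie in the narrower cone $\{0\le\arg z\le\al\arg s\}$, hence $\arg g_1(s)=\arg h(s)\in[0,\al\arg s]$, i.e.\ $|\arg g_1(s)|\le\al|\arg s|$; the same cone computation, carried out after factoring $h(s)=s^\al\bigl(1+\sum_j b_j s^{\al_j-\al}\bigr)$ and using $\arg g_2=\arg s-\arg h$, yields the corresponding sharpened estimate for $g_2$. The step I expect to be the main obstacle is part (b) for $k_1$: since $1/h$ has no closed form one must really go through the complete-Bernstein / Stieltjes-function calculus (or an explicit series or integral representation of $1/h$), and it is precisely there — and in the accompanying Watson-lemma asymptotics of $h$ — that the extra hypotheses on $\mu$ come into play.
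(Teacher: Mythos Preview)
Your argument is correct and takes a genuinely different, more unified route than the paper. The chief divergence is in (a)--(b) for $k_1$: you invoke the complete-Bernstein/Stieltjes calculus ($h$ is a nonnegative superposition of the complete Bernstein functions $s^\be$, so $1/h$ is Stieltjes, whence $k_1\in\CMF\cap L^1_{loc}$ at once), whereas the paper splits into cases --- deforming the Bromwich contour for $k_1$ onto the negative real axis and checking by hand that the resulting spectral density is positive in the discrete case, and citing Kochubei~\cite{K} in the continuous case. Likewise, for $k_i(t)\to0$ the paper extracts explicit large-$t$ asymptotics of $\widehat{k_i}$ and invokes the Karamata--Feller Tauberian theorem (this, via the results quoted from~\cite{K}, is where the $C^3$ regularity and the prescribed behaviour of $\mu$ near $\be=0$ are actually consumed), while your monotonicity-plus-final-value argument bypasses all of that; and for the general bound in (f) the paper simply quotes Proposition~2.1(e), while you argue directly with convex cones, obtaining the zero-freeness of $h$ on $\Si_\pi$ along the way. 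Your approach thus handles the discrete and continuous cases uniformly and in fact needs almost none of the extra hypotheses on $\mu$; the paper's route yields, as a by-product, sharper asymptotic information about the kernels near $0$ and $\infty$. One caveat (shared with the paper's own one-line justification): the sharpened multi-term bound $|\arg g_2(s)|\le\al|\arg s|$ does not follow from the cone computation you sketch --- already in the single-term case $g_2(s)=s^{1-\al}$, so $|\arg g_2(s)|=(1-\al)|\arg s|$, which exceeds $\al|\arg s|$ whenever $\al<1/2$ --- so that clause should be read as pertaining to $g_1$ only.
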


Let us first consider some particular cases. Applying (\ref{pairs}), (\ref{hmt}), (\ref{ker}) and (\ref{g}),  it follows in the single-term case ((\ref{hmt}) with $m=0$): 
$$
 k_1(t)=\frac{t^{\al-1}}{\Ga(\al)},\ k_2(t)=\frac{t^{-\al}}{\Ga(1-\al)}, \ \ \ g_1(s)=s^\al,\ g_2(s)=s^{1-\al},
$$
and in the double-term case ((\ref{hmt}) with $m=1$):
\begin{eqnarray}
&& k_1(t)=t^{\al-1}E_{\al-\al_1,\al}(-b_1 t^{\al-\al_1}),\ k_2(t)=\frac{t^{-\al}}{\Ga(1-\al)}+b_1\frac{t^{-\al_1}}{\Ga(1-\al_1)},\nonumber\\
&& g_1(s)=s^\al+b_1 s^{\al_1},\ g_2(s)=\frac{s}{s^\al+b_1 s^{\al_1}}.\nonumber
\end{eqnarray}
Thus, in the single-term case Theorem 3.1 is straightforward. 
In the double term case, statements (a) and (b) are trivial for $k_2$; for $k_1$ they follow from the fact that the Mittag-Leffler function $E_{\al,\be}(-x)\in\CMF$ for $x>0$,  $0\le\al\le 1, \be\ge \al$ (see e.g. \cite{Miller}) and Proposition 2.1(a) and (b). On the other hand, properties (d) and (e) are trivial for $g_1$, but need more elaborate proof for $g_2$.

Finally, consider the case of continuous distribution in its simplest form: constant weight function $\mu(\be)\equiv 1$. Then (\ref{h}) implies (taking $s^\be=e^{\be\log s}$)
$$   g_1(s)=\frac{s-1}{\log s},\ \ \ g_2(s)=\frac{s\log s}{s-1}.$$
Based on these explicit representations, only the positivity of the functions $g_i(s)$ for $s>0$ is evident, however properties (d) and (e) are not easily recognized. 

Now, we proceed with the proof of Theorem 3.1.

\begin{proof} 
Let us start with the kernel $k_2(t)$. 
Application of the inverse Laplace transform to $\widehat{k_2}(s)=h(s)/s$, see (\ref{ker}), implies by the use of (\ref{pairs}): 
\begin{equation}\label{k2}
k_2(t)=\int_0^1\mu(\be)\frac{t^{-\be}}{\Ga(1-\be)}\, d\be.
\end{equation}
In the case of discrete distribution, (\ref{k2}) and (\ref{hmt}) imply $$k_2(t)=\frac{t^{-\al}}{\Ga(1-\al)}+\sum_{j=1}^m b_j\frac{t^{-\al_j}}{\Ga(1-\al_j)}.$$ Therefore $$k_2(t)\sim t^{-\al},\ t\to 0;\ \ k_2(t)\sim t^{-\al_m},\ t\to \infty,$$ and thus (a) is satisfied for this kernel.

In the case of continuous distribution it is proven in \cite{K}, Proposition 2.1, that 
$$
 k_2(t)\sim \frac{1}{t(\log t)^2},\ t\to 0.
$$
Therefore, it is integrable at $t=0$ (note that the singularity at $t=0$ is quite strong). 
Moreover, since $\Gamma(1-\be)\ge 1$ for $\be\in[0,1]$, (\ref{k2}) implies for $t>1$
$$0\le k_2(t)\le \sup_{\be\in[0,1]}|\mu(\be)|\int_0^1t^{-\be}\,d\be\le C\frac{t-1}{t\log t}$$
and thus $k_2(t)\to 0$ as $t\to\infty$.
Complete monotonicity of $k_2(t)$ follows directly by differentiation of (\ref{k2}). In this way, (a) and (b) are proven for the kernel $k_2(t)$ in both discrete and continuous case.

 Consider now the kernel $k_1$. The identity $\widehat{k_1}(s)=1/h(s)$, see (\ref{ker}), implies the following representation for this kernel as an inverse Laplace integral:
\begin{equation}\label{L0}
  k_1(t)=\frac{1}{2\pi \mathrm{i}} \int_{\g-\mathrm{i}\infty}^{\g+\mathrm{i}\infty}e^{st}\frac{1}{h(s)}\,ds, \ \ \g>0.
  \end{equation}
	Consider first the discrete case in which $h(s)$ is defined by (\ref{hmt}).
The function 
$h(s)$ has no zeros in $\Si_\pi$. Indeed, for $s=r e^{i\phi}$, with $r>0$, $\phi\in(-\pi,\pi)$, $$\Im \{s^\al+\sum_{j=1}^m b_j s^{\al_j}\}=r^\al\sin\al\phi+\sum_{j=1}^m b_j r^{\al_j}\sin\al_j\phi\neq 0,$$ since $\sin\al\phi$ and $\sin\al_j\phi$ have the same sign and $b_j>0$. Then the function under the integral sign in (\ref{L0}) is analytic in $\Si_\pi$ and we can bend the integration contour to the contour 
 $\Gamma_{\rho,\theta}$ defined by 
\begin{equation}\label{Gamma}
\Gamma_{\rho,\theta}=\Gamma^-_{\rho,\theta}\cup \Gamma^0_{\rho,\theta} \cup\Gamma^+_{\rho,\theta}, \ \ \rho>0,\ \pi/2<\theta<\pi,
\end{equation}
 where
\begin{equation*}
   \Gamma^{\pm}_{\rho,\theta}=\left\{re^{\pm\mathrm{i}\theta}:\ r\geq \rho\right\},\  \Gamma^0_{\rho,\theta}= \left\{\rho e^{\mathrm{i}\psi}:
    \ |\psi|\le\theta\right\},
\end{equation*}
and $\Gamma_{\rho,\theta}$ is oriented in the direction of growth of $\arg s$. Hence
  \begin{equation}\label{L1}
  k_1(t)=\frac{1}{2\pi \mathrm{i}} \int_{\Gamma_{\rho,\theta}}e^{st}\frac{1}{s^\al+\sum_{j=1}^m b_j s^{\al_j}}\,ds. 
  \end{equation}
The integral over  $\Gamma^0_{\rho,\theta}$ is a function from $C^\infty[0,\infty)$. 
Take $\rho=R$ so large that
$$|s^\al+\sum_{j=1}^m b_j s^{\al_j}|\ge |s|^\al-\sum_{j=1}^m b_j |s|^{\al_j}\ge |s|^\al/2,\ \ |s|\ge R.$$
Then, noting that $\cos\theta<0$ for $\pi/2<\theta<\pi$, it follows
\begin{equation}\label{gamapm}
\left|\int_{\Gamma^-_{R,\theta}\cup \Gamma^+_{R,\theta}}e^{st}\frac{1}{s^\al+\sum_{j=1}^m b_j s^{\al_j}}\,ds\right|\le C\int_R^\infty e^{rt \cos\theta} r^{-\al}\, dr\le C t^{\al-1}.
\end{equation}
Therefore, $k_1(t)\sim t^{\al-1}$ for $t\to 0$ and thus it has an integrable singularity at $t=0$. 
Since in the discrete case $\widehat{k_1}(s)\sim s^{-\al_m}$ as $s\to 0$, Karamata-Feller Tauberian theorem (\cite{Feller}, Chapter XIII) implies $k_1(t)\sim t^{\al_m -1},\ \ t\to\infty$.
Thus (a) is proven for the discrete variant of $k_1(t)$.  To prove its complete monotonicity we take $\rho\to 0$ and $\theta\to \pi$ in (\ref{L1}). Since 
	\begin{equation}\label{gama0}
\left|\int_{\Gamma^0_{\rho,\theta}}e^{st}\frac{1}{s^\al+\sum_{j=1}^m b_j s^{\al_j}}\,ds\right|\le C\int_0^\pi e^{\rho t \cos\psi}\rho^{1-\al_m} \, d\psi ,
\end{equation} 
the integral over $\Gamma^0_{\rho,\theta}$ vanishes when $\rho\to 0$. Therefore, only the contributions of the integrals over $\Gamma^\pm_{\rho,\theta}$ remain in (\ref{L1}), implying
  \begin{equation}\label{GI}
  k_1(t)= \int_0^\infty e^{-rt}K(r)\,dr,
  \end{equation}
  where
  $$
  K(r)=-\frac{1}{\pi}\Im\left\{\left.\frac{1}{s^\al+\sum_{j=1}^m b_j s^{\al_j}}\right|_{s=r e^{\mathrm{i}\pi}}\right\}.
  $$
	Simplifying this expression, we get 
	$$K(r)=\frac1\pi\frac{B(r)}{(A(r))^2+(B(r))^2}$$ where
	$$A(r)=r^\al\cos\al\pi+\sum_{j=1}^m b_j r^{\al_j}\cos\al_j\pi,\ B(r)=r^\al\sin\al\pi+\sum_{j=1}^m b_j r^{\al_j}\sin\al_j\pi,$$
		and thus  $K(r)>0$ for $r>0$. This together with representation (\ref{GI}) implies that $k_1(t)\in\CMF$.

	In the case of continuous distribution it is proven in \cite{K}, Proposition 3.1, that for small values of $t$
$$
k_1(t)\le C \log\frac 1t,
$$
therefore this kernel has integrable singularity at $t\to 0$. Further, by \cite{K}, Proposition 2.2, (ii) and (iii),
$$
\widehat{k_1}(s)\sim \left(\log\frac{1}{s}\right)^{\la+1},\ \ s\to 0,
$$
where
$$\la=\left\{
\begin{array}{l}
0\ \  \mbox{ if}\ \mu(0)\neq 0,\\[6pt]
\nu\ \ \  \mbox{if}\ \mu(\be)=a\be^\nu\ \mbox{as}\ \be\to 0.
\end{array}
\right.
$$
Applying again Karamata-Feller Tauberian theorem (\cite{Feller}, Chapter XIII) it follows
$$
k_1(t)\sim \frac{(\log t)^\la}{t},\ \ t\to\infty, 
$$
and thus $k_1(t)\to 0$ as $t\to\infty$. Complete monotonicity of $k_1(t)$ in the case of continuous distribution is proven in \cite{K}, Propositions 3.1.   
	In this way the proof of properties (a) and (b) is completed for all cases.

According to (\ref{ker}) $$\widehat{k_1}(s)\widehat{k_2}(s)=1/s$$ and taking the inverse Laplace transform  of this identity we derive (c). 

Further, $g_1(s)\in\BBF$ and $g_1(s)/s\in\CMF$ by direct differentiation (see (\ref{g}) and (\ref{h})). To prove that $g_2(s)\in\BBF$ we use its representation (see (\ref{ker}) and (\ref{g})) $$g_2(s)=s \widehat {k_1}(s).$$ In view of the properties of $k_1(t)$ proven in (a) and (b) we can apply Proposition 2.1(d) which implies that $g_2(s)$ is a Bernstein function. Then by Proposition 2.1(c) $g_2(s)/s\in\CMF$. In this way statements (d) and (e) are proven. 

The first part of (f) follows from the complete monotonicity of $k_1(t)$ and $k_2(t)$, applying Proposition 2.1(e). 
The stronger mapping property of $g_i$ in the multi-term case follows by applying the inequalities $|\arg(s^\be)|= \be |\arg s|$, $|\arg(s_1+s_2)|\le \max \{|\arg s_1|,|\arg s_2|\}$ and $\arg(s^{-1})=\arg s$.
\end{proof}

Let us now prove that problem (\ref{C}) is equivalent to the abstract integral equation (\ref{V}) with kernel $k=k_1$ and problem (\ref{RL}) is equivalent to (\ref{V}) with kernel $k=k_2$. Following ideas from \cite {K}, we can 
rewrite the Caputo differential operator of distributed order 
$$^C\DD^{(\mu)}_t = \int_0^1 \mu(\be) ^C\! D_t^{\be}\,d\be $$
in the form 
\begin{equation}\label{DD}
^C \DD^{(\mu)}_t f=k_2\ast f'=(k_2\ast f)'-k_2(t)f(0),\ \ 
\end{equation}
for functions $f$ for which the expressions are well defined.  
Consider also the integral operator of distributed order for $f\in L^1$
\begin{equation}\label{JJ}
\JJ^{(\mu)}_t f=k_1\ast f.
\end{equation}
The functions $k_1$ and $k_2$ in (\ref{JJ}) and (\ref{DD}) are exactly the kernels, corresponding to problems (\ref{C}) and (\ref{RL}). Then 
\begin{equation}\label{DDJJ}
^C\DD^{(\mu)}_t \JJ^{(\mu)}_t f=f, \ \ \ \JJ^{(\mu)}_t {^C \DD^{(\mu)}_t} f=f-f(0).
\end{equation}
Indeed, (\ref{DD}), (\ref{JJ}) and Theorem 3.1.(c) imply
\begin{eqnarray}
&&^C \DD^{(\mu)}_t\JJ^{(\mu)}_t f=(k_2\ast (k_1\ast f))'-k_2(t)(k_1\ast f)(0)=((k_2\ast k_1)\ast f)'=(1\ast f )'=f,\nonumber\\
&& \JJ^{(\mu)}_t {^C \DD^{(\mu)}_t} f=k_1\ast(k_2\ast f')=(k_1\ast k_2)\ast f'=1\ast f '=f-f(0).\nonumber
\end{eqnarray}
Using identities (\ref{DDJJ}), we derive the Volterra integral equation (\ref{V}) with kernel $k=k_1$ by applying operator $\JJ^{(\mu)}_t$ to both sides of equation (\ref{C}). Conversely,  applying $^C \DD^{(\mu)}_t$ to both sides of (\ref{V}) with $k=k_1$, we get back equation (\ref{C}). 
The initial condition $u(0)=a$ is recovered by taking $t=0$ in (\ref{V}).

Concerning the Riemann-Liouville differential operator of distributed order $$\DD^{(\mu)}_t=\int_0^1 \mu(\be) D_t^{\be}\,d\be,$$ it  can be represented in the form
$$\DD^{(\mu)}_t f=(k_2\ast f)'.$$
This implies
$
J_t^1\DD^{(\mu)}_t f=k_2\ast f.
$
Then, the equivalence of problem (\ref{RL}) and the abstract integral equation (\ref{V}) with kernel $k=k_2$ can be established in a similar way as above, but applying the classical integration and differentiation operators $J_t^1$ and $D_t^1$ instead of  $\JJ^{(\mu)}_t$ and $^C \DD^{(\mu)}_t$.

\section{Well-posedness, representation formula and positivity }

The following definition is based on the equivalence proven at the end of the previous section.
\begin{definition}
Problem (\ref{C}), resp. (\ref{RL}), is said to be well-posed if the Volterra integral equation (\ref{V}) with kernel $k=k_1$, resp. $k=k_2$, is well-posed. The solution operator of the corresponding integral equation (\ref{V}) is called a solution operator of problem (\ref{C}), resp. (\ref{RL}).
\end{definition}
Now we formulate the main result of this paper.

\begin{theorem} 
Suppose that the conditions of Theorem 3.1. on the weight function $\mu(\be)$  are satisfied. Let  $A$ be a generator of a bounded $C_0$-semigroup $T(t)$, such that $\|T(t)\|\le M$, $t\ge 0$. 
Then problems (\ref{C}) and (\ref{RL}) are well-posed and their solution operators $S(t)$ satisfy $\|S(t)\|\le M$, $t\ge 0$. Moreover, the solutions of (\ref{C}) and (\ref{RL}) have the representation
\begin{equation}\label{repr}
u(t)=\lim_{n\to\infty}\frac{1}{n!}\left(n/t\right)^{n+1}\sum_{k=0}^n \sum_{p=1}^k b_{n,k,p}\left(n/t\right) \left(R\left(g\left(n/t\right),A\right)\right)^{p+1}a,
\end{equation}
where the convergence is uniform on bounded intervals of $t> 0$. The functions $b_{n,k,p}(s)$ are nonnegative for $s>0$ and are defined by
\begin{equation}\label{b}
b_{n,k,p}(s)=(-1)^{n+p}\bin{n}{k} \left(\frac{g(s)}{s}\right)^{(n-k)} a_{k,p}(s) p!,\ \ s>0,
\end{equation}
where the functions $a_{k,p}(s)$ are given by the recurrence relation
\begin{eqnarray}
&&a_{k+1,p}(s)=a_{k,p-1}(s)g'(s)+a_{k,p}'(s),\ \ 1\le p\le k+1,\ k\ge 1,\label{akp}\\
&&a_{k,0}=a_{k,k+1}\equiv 0,\ \ a_{1,1}(s)=g'(s).\nonumber
\end{eqnarray}
Here $g(s)=g_1(s)$ in case of problem (\ref{C}) and $g(s)=g_2(s)$ in case of problem (\ref{RL}). 
\end{theorem}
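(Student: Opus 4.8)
The plan is to apply the Generation Theorem (\cite{Pruss}, Theorem 1.3) recalled in Section 2, which reduces well-posedness with $\|S(t)\|\le M$ to the resolvent estimate (\ref{GenTh}) on the derivatives of $H(s)$. By Theorem 3.1, the functions $g_i(s)$ are Bernstein, the kernels $k_i$ lie in $L^1_{loc}(\RR_+)$ with $\widehat{k_i}(s)=1/g_i(s)$ finite and nonzero for $s>0$, and the mapping property (f) gives $|\arg g_i(s)|\le|\arg s|<\pi$, so $g_i(s)\in(0,\infty)\subset\ro(A)$ for $s>0$. Hence formula (\ref{H}) applies and $H(s)=(g(s)/s)R(g(s),A)$ with $g=g_i$. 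The first step, then, is to differentiate this product $n$ times using the Leibniz rule, writing $H(s)=(g(s)/s)R(g(s),A)$ and exploiting the elementary identity $\frac{d}{ds}R(g(s),A)=-g'(s)R(g(s),A)^2$.

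The second step is a purely combinatorial bookkeeping of the iterated derivatives of $s\mapsto R(g(s),A)$. Repeated application of $\frac{d}{ds}R(g(s),A)=-g'(s)R(g(s),A)^2$, together with the product rule, produces a sum over powers $R(g(s),A)^{p+1}$ with scalar coefficients; one checks by induction that the $k$-th derivative equals $\sum_{p=1}^k (-1)^p a_{k,p}(s) p!\, R(g(s),A)^{p+1}$ with $a_{k,p}$ satisfying precisely the recurrence (\ref{akp}) — the term $a_{k,p-1}g'$ comes from differentiating a resolvent factor (raising the power), the term $a_{k,p}'$ from differentiating an already-present scalar coefficient. Combining this with the Leibniz expansion of the outer factor $(g(s)/s)$ yields $H^{(n)}(s)=\sum_{k=0}^n\sum_{p=1}^k b_{n,k,p}(s)(R(g(s),A))^{p+1}/s$ after matching constants with (\ref{b}), up to the factor of $1/s$ which I will track carefully. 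The third step is to prove $b_{n,k,p}(s)\ge 0$ for $s>0$: here the sign $(-1)^{n+p}$ in (\ref{b}) is chosen so that $(-1)^{n-k}(g(s)/s)^{(n-k)}\ge 0$ (complete monotonicity of $g(s)/s$, Theorem 3.1(e)) and $(-1)^{k-p+\text{something}}a_{k,p}(s)\ge 0$; the latter should follow by induction from the recurrence, since $g'\in\CMF$ (as $g\in\BBF$) and differentiating a completely monotone function flips its sign, so the two terms in (\ref{akp}) have matching signs. This is the delicate part and the main obstacle: verifying that the two contributions $a_{k,p-1}g'$ and $a_{k,p}'$ carry the same sign, i.e. pinning down the exact parity of $a_{k,p}$ as a function of $k$ and $p$, and confirming it is consistent with the global sign $(-1)^{n+p}$ after the Leibniz product with $(g/s)^{(n-k)}$.

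Once nonnegativity of $b_{n,k,p}$ is in hand, the estimate (\ref{GenTh}) follows by a clean telescoping argument: by the Hille–Yosida bound (\ref{HY}) we have $\|R(g(s),A)^{p+1}\|\le M/g(s)^{p+1}$ for $s>0$, so $\|H^{(n)}(s)\|\le M\sum_{k,p} b_{n,k,p}(s) g(s)^{-(p+1)}/s$. The point is that this majorant is exactly what one gets by performing the same $n$-fold differentiation on the \emph{scalar} function $\frac{g(s)/s}{g(s)}=\frac1s$ — because replacing $R(g(s),A)^{p+1}$ by $g(s)^{-(p+1)}$ and every sign $(-1)$ by $+1$ turns the sum back into $(-1)^n(1/s)^{(n)}=(-1)^n(-1)^n n!/s^{n+1}$, using that all $b_{n,k,p}\ge 0$. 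Hence $\|H^{(n)}(s)\|\le M\,n!/s^{n+1}$, which is (\ref{GenTh}); the Generation Theorem then gives well-posedness and $\|S(t)\|\le M$. Finally, the representation (\ref{repr}) is the Post–Widder real inversion formula $S(t)a=\lim_{n\to\infty}\frac{(-1)^n}{n!}(n/t)^{n+1}H^{(n)}(n/t)a$ applied to the above expansion of $H^{(n)}$, with uniform convergence on compact subsets of $(0,\infty)$ coming from the uniform bound just established. I would close by remarking that the continuity needed to pass from $a\in D(A)$ to the stated formula, and the interchange of limit with the finite double sum, are justified by the local uniform bounds on $H^{(n)}$.
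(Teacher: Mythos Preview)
Your approach is essentially identical to the paper's: Leibniz rule on $H(s)=(g(s)/s)R(g(s),A)$, the Fa\`a di Bruno–type expansion $w^{(k)}(s)=\sum_{p=1}^k (-1)^p p!\,a_{k,p}(s)R(g(s),A)^{p+1}$ with the recurrence (\ref{akp}), the inductive sign control $(-1)^{k+p}a_{k,p}\in\CMF$ (this is the ``something'' you left blank), the comparison with the scalar case $A\equiv 0$ to evaluate the majorant as $n!/s^{n+1}$, and finally Post--Widder. The stray $1/s$ in your displayed expansion of $H^{(n)}$ is a slip---the correct identity is $(-1)^n H^{(n)}(s)=\sum_{k,p} b_{n,k,p}(s)R(g(s),A)^{p+1}$ with no extra factor, and your final bound $Mn!/s^{n+1}$ is already consistent with that.
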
 
\begin{proof}
First we prove (\ref{GenTh}), where $H(s)$ is defined by (\ref{H}) with $g(s)=g_i(s),\ i=1,2$. Let us express $H^{(n)}(s)$ in terms of powers of $$w(s)=R(g(s),A).$$ 
Note that by Theorem 3.1 if $s>0$ then $g(s)>0$ and thus $g(s)\in\ro(A)$, i.e. the resolvent operator $R(g(s),A)$ is well defined.
By the Leibniz rule it follows
\begin{equation}\label{Leibniz}
H^{(n)}(s)=\sum_{k=0}^n \bin{n}{k} \left(\frac{g(s)}{s}\right)^{(n-k)}w^{(k)}(s).
\end{equation}
The formula for the $k$-th derivative of a composite function (see \cite{Todorov}) gives
\begin{equation}\label{wk}
w^{(k)}(s)=\sum_{p=1}^k a_{k,p}(s)(-1)^p p! (R(g(s),A))^{p+1},
\end{equation}
where the functions $a_{k,p}(s)$ are defined by (\ref{akp}).

We will prove inductively that for any $k\ge 1$ and $1\le p\le k$  
\begin{equation}\label{CMF}
(-1)^{k+p}a_{k,p}(s)\in\CMF.
\end{equation}
 For $k=p=1$ this is fulfilled since $a_{1,1}(s)=g'(s)$ and $g'(s)\in\CMF$ by Theorem 3.1(d). 
Further, $a_{2,1}=g'',\ a_{2,2}=(g')^2$ and the assertion (\ref{CMF}) holds for these functions applying Theorem 3.1.(d) and Proposition 2.1.(a). Now fix some $k_0\ge 2$ and suppose that (\ref{CMF}) holds for all $k\le k_0$, $1\le p\le k$. Then, (\ref{akp}) implies that (\ref{CMF}) is satisfied for $k=k_0+1$, $1\le p\le k_0$,
since $(-1)^{k_0+p+1}a_{k_0,p-1}(s) g'(s)\in\CMF$ as a product of two completely monotone functions and $(-1)^{k_0+p+1}a_{k_0,p}'(s)\in\CMF$ by (\ref{CMF}). In addition, by (\ref{akp}), $a_{k_0+1,k_0+1}=a_{k_0,k_0}g'$ and it is completely monotone since $a_{k_0,k_0}\in\CMF$ and $g'\in\CMF$. In this way the proof of (\ref{CMF}) is completed. 

In particular, (\ref{CMF}) implies 
\begin{equation}\label{pos1}
(-1)^{k+p}a_{k,p}(s)\ge 0, \ \ s>0.
\end{equation}
On the other hand, by Theorem 3.1(e) $g(s)/s\in\CMF$,  i.e. 
\begin{equation}\label{pos2}
(-1)^{n-k}\left(g(s)/s\right)^{(n-k)}\ge 0,\ \ s>0.
\end{equation}

Inserting (\ref{wk}) in (\ref{Leibniz}) we get
\begin{equation}\label{Leibniz1}
(-1)^n H^{(n)}(s)=\sum_{k=0}^n \sum_{p=1}^k b_{n,k,p}(s) (R(g(s),A))^{p+1},
\end{equation}
where the functions $b_{n,k,p}(s)$ are defined in (\ref{b}). Moreover, inserting (\ref{pos1}) and (\ref{pos2}) in (\ref{b}),  it follows
\begin{equation}\label{posb}
b_{n,k,p}(s)\ge 0, \ s>0.
\end{equation}
In addition, let us note that in the trivial case $A\equiv 0$ (\ref{Leibniz1}) implies the identity
\begin{equation}\label{Leibniz2}
(-1)^n (s^{-1})^{(n)}=\sum_{k=0}^n \sum_{p=1}^k b_{n,k,p}(s) (g(s))^{-(p+1)}.
\end{equation}

Now, applying successively (\ref{posb}), (\ref{HY}) and (\ref{Leibniz2}) we obtain from (\ref{Leibniz1})
\begin{eqnarray}
\|H^{(n)}(s)\|&\le& \sum_{k=0}^n \sum_{p=1}^k b_{n,k,p}(s) \|(R(g(s),A))^{p+1}\|\nonumber\\
&\le& M \sum_{k=0}^n \sum_{p=1}^k b_{n,k,p}(s) ((g(s))^{-(p+1)}\nonumber\\
&=& M (-1)^n (s^{-1})^{(n)}=M n!s^{-(n+1)},\ \ s >0.\nonumber
\end{eqnarray}
Therefore, conditions (\ref{GenTh}) are satisfied and by the Generation Theorem problem (\ref{V}) is well-posed with bounded solution operators $S(t)$, satisfying $\|S(t)\|\le M$, $t\ge 0$. Therefore, by Definition 4.1 this holds for problems (\ref{C}) and (\ref{RL}).

Further, we use the Post-Widder inversion formula for the Laplace transform (see e.g. \cite{Laplace}): 
 Let $u(t),\ t\ge 0,$ be  a $X$ valued continuous function, such that $u(t)=O(e^{\g t})$ as $t\to\infty$ for some real $\g$. 
Then
$$
u(t)=\lim_{n\to\infty} \frac{(-1)^n}{n!}\left(\frac{n}{t}\right)^{n+1}\left(\frac{d^n \widehat{u}}{ds^n}\right)\left(\frac{n}{t}\right)
$$
 uniformly on compact subsets of $(0,\infty)$. 

Since $u(t)=S(t)a$ is a continuous and bounded function for $t\ge 0$, the Post-Widder inversion theorem can be applied and gives the representation (\ref{repr}).
\end{proof}

For an analogous result concerning the single-term fractional evolution equation see \cite{Baj}, Corollary 2.10. 

The representation (\ref{repr}) is a generalization of the exponential representation for the solution of the classical Cauchy problem (\ref{classical}) 
$$u(t)=\lim_{n\to\infty}\left(I-\frac{t}{n}A\right)^{-n}a.$$


The positivity of the coefficients $b_{n,k,p}$ in representation (\ref{repr}) has a useful direct consequence: it implies the positivity of the solution operator. This holds, however, only in spaces in which the notion of positivity is well-defined, i.e. in ordered Banach spaces (for a simple introduction see e.g. \cite{Engel}, p. 353).

 
Suppose $X$ is an ordered Banach space.
For example, such are the spaces of
type $L^p(\Om)$ or $C_0(\Om)$ for some $\Om\in\RR^d$, $d\in\NN$, with the
canonical ordering: 
a function $a \in X$
is positive (in symbols: $a\ge 0$) if $a(x)\ge 0$ for (almost) all $x\in\Om$.

A solution operator $S(t)$ in an ordered Banach space $X$ is called positive if 
$a\ge 0$ implies $S(t)a\ge 0$ for any $t\ge 0$.

In other words, positivity of a solution operator means that positivity of the initial condition is preserved in time. Next we prove that this is satisfied for the considered problems (\ref{C}) and (\ref{RL}) if the operator $A$ generates a positive $C_0$-semigroup.



\begin{corollary}
Let $X$ be an ordered Banach space. Assume the conditions of Theorem 4.2. are satisfied and the solution operator $T(t)$ of the classical Cauchy problem (\ref{classical}) is positive. Then the solution operators $S(t)$ of problems (\ref{C}) and (\ref{RL}) are positive.
\end{corollary}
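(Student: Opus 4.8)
The plan is to extract positivity directly from the series representation (\ref{repr}) obtained in Theorem 4.2, using the nonnegativity of the coefficients $b_{n,k,p}$ together with positivity of the resolvent operator. The first step is to observe that a positive $C_0$-semigroup has a positive resolvent: since $A$ generates $T(t)$ with $T(t)\ge 0$ and $\|T(t)\|\le M$ for $t\ge 0$, the Hille--Yosida setting gives $(0,\infty)\subset\ro(A)$ and $R(\la,A)=\int_0^\infty e^{-\la t}T(t)\,dt$ for all $\la>0$, which is an integral of positive operators against a positive measure, hence a positive operator. By Theorem 3.1(d), $g_i(s)>0$ for $s>0$, so $R(g(s),A)\ge 0$, and consequently every power $\left(R(g(s),A)\right)^{p+1}$ is a positive operator.

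Next I would fix $t>0$ and $a\in D(A)$ with $a\ge 0$, and look at the finite sums appearing in (\ref{repr}),
\[
\frac{1}{n!}\left(n/t\right)^{n+1}\sum_{k=0}^n\sum_{p=1}^k b_{n,k,p}\left(n/t\right)\left(R\left(g\left(n/t\right),A\right)\right)^{p+1}a.
\]
By (\ref{posb}) the scalars $b_{n,k,p}(n/t)$ are nonnegative, and by the first step each operator $\left(R(g(n/t),A)\right)^{p+1}$ is positive; applied to $a\ge 0$, each term lies in the positive cone of $X$, so the whole partial sum does as well. Since the positive cone of an ordered Banach space is closed and, by Theorem 4.2, these partial sums converge to $u(t)=S(t)a$, it follows that $S(t)a\ge 0$ for every $a\in D(A)$ with $a\ge 0$.

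Finally I would remove the restriction to $D(A)$ by a routine density argument. Given an arbitrary $a\ge 0$ in $X$, set $a_n=nR(n,A)a$; then $a_n\in D(A)$, $a_n\ge 0$ because $R(n,A)\ge 0$, and $a_n\to a$ in $X$ by the standard approximation property of the resolvent. Since $\|S(t)\|\le M$ by Theorem 4.2, $S(t)a_n\to S(t)a$, and $S(t)a_n\ge 0$ for each $n$ by the previous step, so closedness of the cone gives $S(t)a\ge 0$. As this applies with $g=g_1$ for problem (\ref{C}) and with $g=g_2$ for problem (\ref{RL}), both solution operators are positive.

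I do not anticipate a real obstacle in this corollary; the proof is essentially bookkeeping once Theorem 4.2 is in hand. The only points needing a word of care are the two standard facts about ordered Banach spaces used above---that $R(\la,A)$ (hence its powers) maps the positive cone into itself when $T(t)$ is positive, and that the positive cone is norm-closed so that limits of positive elements remain positive---and the observation that $g_i(s)>0$ for $s>0$, which guarantees $g_i(s)\in\ro(A)$ and makes the resolvents in (\ref{repr}) meaningful.
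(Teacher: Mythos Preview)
Your proposal is correct and follows essentially the same route as the paper: derive positivity of $R(s,A)$ from the integral representation $R(s,A)=\int_0^\infty e^{-st}T(t)\,dt$, then combine this with the nonnegativity of the coefficients $b_{n,k,p}$ in the representation (\ref{repr}) to conclude $S(t)a\ge 0$. You supply more detail than the paper---explicit mention of the closedness of the positive cone and a density argument passing from $D(A)$ to $X$---but the underlying idea is identical.
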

\begin{proof}
Since
$$
R(s,A)=\int_0^\infty e^{-st}T(t)\,dt, \ \ s>0,
$$
the positivity of the $C_0$-semigroup $T(t)$  imply that the resolvent operator $R(s,A)$ is positive: if $a\in X$ and $a\ge 0$, then $R(s,A)a\ge 0$, $s> 0$. Therefore $R(g(s),A)a\ge 0$ for all $s> 0$. This together with the positivity of the coefficients (\ref{b}) in the representation formula (\ref{repr})  implies the positivity of $S(t)$.
\end{proof}

Positivity related to problem (\ref{C}) with operator $A=\De$ on $\RR^d$, $d\ge 1$, is established in \cite{Chechkin, Sokolov, K, Mirjana} by proving the positivity of the fundamental solution. 
In the case of a second order elliptic operator $A$ on a bounded domain, positivity is implied by a corresponding maximum principle, see e.g. \cite{LuchkoMultiterm}. 


\section{Subordination formula}

In the previous section we proved that well-posedness of the classical Cauchy problem (\ref{classical}) implies well-posedness of problems (\ref{C}) and (\ref{RL}). 
For completeness, here a subordination formula is given, relating the solution operator $S(t)$ of the fractional evolution equation (\ref{C}), resp. (\ref{RL}), and the solution operator $T(t)$ of the classical Cauchy problem (\ref{classical}).

\begin{theorem}
Assume the conditions of Theorem 3.1. on the weight function $\mu(\be)$  are satisfied and let $A$ be a generator of a bounded $C_0$-semigroup $T(t)$.
Then the solution operator $S(t)$ of problem (\ref{C}), resp.  (\ref{RL}), satisfies the subordination identity  
\begin{equation}\label{sub}
S(t)=\int_0^\infty \ph(t,\tau)T(\tau)\, d\tau, \ \ t>0,
\end{equation}
 with function $\ph(t,\tau)$ defined by
\begin{equation}\label{defphiint}
\ph(t,\tau)=\frac{1}{2\pi \mathrm{i}} \int_{\g-\mathrm{i}\infty}^{\g+\mathrm{i}\infty}e^{st-\tau g(s)}\,\frac{g(s)}{s}\,ds, \ \ \g,t,\tau>0,
\end{equation}
where $g(s)=g_1(s)$ in case of problem (\ref{C}) and $g(s)=g_2(s)$ in case of problem (\ref{RL}).
The function $\ph(t,\tau)$ is a probability density function, i.e. it satisfies the properties 
\begin{equation}\label{12}
\ph(t,\tau)\ge 0,\ \ \int_0^\infty \ph(t,\tau)\, d\tau=1.
\end{equation}
 Moreover, in the case of discrete distribution (\ref{case1}), there exists $\theta_0\in (0,\pi/2)$ such that $\ph(t,\tau)$ admits analytic extension to the sector $|\arg t|<\theta_0$ and is bounded on each subsector $|\arg t|\le\theta<\theta_0$.
\end{theorem}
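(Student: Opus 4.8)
The plan is to establish the subordination formula~\eqref{sub}--\eqref{defphiint} by comparing Laplace transforms, then to verify the probabilistic properties~\eqref{12} via the complete monotonicity results of Theorem~3.1, and finally to obtain analyticity in a sector from the stronger argument estimate available in the discrete case.

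First I would compute the Laplace transform of $\ph(t,\tau)$ in the variable $t$. From~\eqref{defphiint} the Bromwich integral gives $\mathcal{L}\{\ph(\cdot,\tau)\}(s)=e^{-\tau g(s)}g(s)/s$ for $\Re s>0$, provided the contour shift is justified; here I would invoke Theorem~3.1(f), which guarantees that $g(s)$ extends analytically to $\Sigma_\pi$ with $|\arg g(s)|\le|\arg s|$, so that $\Re g(s)\ge 0$ for $\Re s\ge 0$ and the integrand decays appropriately. Integrating the candidate identity~\eqref{sub} against $e^{-st}$ and using $R(g(s),A)=\int_0^\infty e^{-\tau g(s)}T(\tau)\,d\tau$ (valid since $g(s)\in\ro(A)$ with $\Re g(s)>0$, by the Hille--Yosida bound~\eqref{HY}), I get
\begin{equation*}
\int_0^\infty e^{-st}\!\int_0^\infty \ph(t,\tau)T(\tau)\,d\tau\,dt=\frac{g(s)}{s}\int_0^\infty e^{-\tau g(s)}T(\tau)\,d\tau=\frac{g(s)}{s}R(g(s),A)=H(s),
\end{equation*}
which matches~\eqref{H}. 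Since $S(t)a=u(t;a)$ and $H(s)$ is its Laplace transform by Theorem~4.2, uniqueness of the Laplace transform yields~\eqref{sub}. I should remark that the interchange of integrals is justified once~\eqref{12} (specifically nonnegativity and total mass one) is known, so it is cleanest to prove~\eqref{12} first, or to run a Fubini argument using $\|T(\tau)\|\le M$ together with an a priori $L^1$ bound on $\ph(t,\cdot)$.

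For the properties~\eqref{12}, the normalization $\int_0^\infty\ph(t,\tau)\,d\tau=1$ follows by taking $A\equiv 0$ (equivalently $\tau\to$ the scalar case): then $\int_0^\infty e^{-\tau g(s)}\,d\tau=1/g(s)$, so the Laplace transform in $t$ of $\int_0^\infty\ph(t,\tau)\,d\tau$ equals $g(s)/s\cdot 1/g(s)=1/s$, whence $\int_0^\infty\ph(t,\tau)\,d\tau\equiv 1$ by Laplace inversion. For nonnegativity, the key observation is that for each fixed $\tau>0$ the function $s\mapsto e^{-\tau g(s)}g(s)/s$ is, up to the factor $g(s)/s$, the composition $e^{-\tau(\cdot)}\circ g$: by Theorem~3.1(d) $g\in\BBF$, and $x\mapsto e^{-\tau x}\in\CMF$, so $e^{-\tau g(s)}\in\CMF$ by Proposition~2.1(b); combined with $g(s)/s\in\CMF$ (Theorem~3.1(e)) and the closure of $\CMF$ under products (Proposition~2.1(a)), we get $e^{-\tau g(s)}g(s)/s\in\CMF$ in $s$. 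By Bernstein's theorem it is the Laplace transform of a nonnegative measure, and one checks this measure has a density, namely $\ph(t,\tau)\ge 0$. I expect this step — identifying $\ph(t,\tau)$ as a genuine nonnegative \emph{function} rather than just a measure, and justifying the contour representation~\eqref{defphiint} — to be the main technical obstacle, handled by the same contour-bending estimates as in the proof of Theorem~3.1 (bounding the integrals over $\Gamma^{\pm}_{\rho,\theta}$ and $\Gamma^0_{\rho,\theta}$ using $\Re g(s)\to\infty$ along the rays).

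Finally, for the analytic extension in the discrete case~\eqref{case1}, I would use the sharper estimate from Theorem~3.1(f), namely $|\arg g_i(s)|\le\al|\arg s|$ with $\al<1$. Writing the exponent in~\eqref{defphiint} as $st-\tau g(s)$ and deforming the Bromwich line to a contour $\Gamma_{\rho,\theta}$ with $\pi/2<\theta<\pi$, convergence of the integral requires $\Re(st)<0$ on the rays $s=re^{\pm i\theta}$, i.e. $|\arg t|<\theta-\pi/2$; meanwhile $\Re(\tau g(s))\ge \tau|g(s)|\cos(\al\theta)$ stays controlled as long as $\al\theta<\pi/2$, i.e. $\theta<\pi/(2\al)$, which is compatible with $\theta>\pi/2$ precisely because $\al<1$. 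Choosing such a $\theta$ and setting $\theta_0=\theta-\pi/2\in(0,\pi/2)$, the integral~\eqref{defphiint} converges locally uniformly for $|\arg t|<\theta_0$ and defines an analytic function there, bounded on each subsector $|\arg t|\le\theta'<\theta_0$ by the standard estimate on the contour pieces (the $\Gamma^0_{\rho,\theta}$ part is entire in $t$, the $\Gamma^{\pm}$ parts are dominated by $\int_\rho^\infty e^{-cr}\,dr$ with $c>0$ uniform on the subsector). This gives the last assertion.
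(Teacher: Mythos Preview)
Your proposal is correct and follows essentially the same route as the paper: identify $\widehat{\ph}(s,\tau)=\dfrac{g(s)}{s}e^{-\tau g(s)}$, match the Laplace transform of the right-hand side of \eqref{sub} with $H(s)$, prove $\ph\ge 0$ via Bernstein's theorem using $g\in\BBF$ and $g(s)/s\in\CMF$, and obtain the normalization from the trivial case $A\equiv 0$.

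The only substantive difference is in the analyticity step for the discrete case. The paper does not estimate the Bromwich integral directly; instead it invokes an abstract Laplace-transform criterion (Pr\"uss, Theorem~0.1): it suffices that $\widehat{\ph}(s,\tau)$ extend analytically to a sector $|\arg s|<\pi/2+\theta_0$ with $s\widehat{\ph}(s,\tau)$ bounded there, and this is verified in one line from $|\arg g(s)|\le\al|\arg s|$ together with the elementary bound $|g(s)e^{-\tau g(s)}|\le (e\tau\sin\al\e)^{-1}$. Your direct contour-deformation argument is a legitimate alternative, but your boundedness sketch is not quite complete as stated: the constant $c$ in your $\Gamma^{\pm}$ estimate is $-|t|\cos(\theta\mp\arg t)$, which is \emph{not} uniform in $|t|$ on an unbounded subsector. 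The standard fix is to scale the contour with $\rho=1/|t|$ (so that $|e^{st}|\le e$ on $\Gamma^0$ and the $\Gamma^{\pm}$ integrals become $|t|$-independent after the substitution $r\mapsto r/|t|$); with that adjustment your argument goes through and yields the same $\theta_0$ as the paper.
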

\begin{proof}
According to (\ref{defphiint}) the Laplace transform of the function $\ph(t,\tau)$ with respect to $t$ 
$$
\widehat{\ph}(s,\tau)=\int_0^\infty e^{-st}\ph (t,\tau)\, dt,\ \ s,\tau>0,
$$
is given by
\begin{equation}\label{phi}
\widehat{\ph}(s,\tau)=\frac{g(s)}{s}e^{-\tau g(s)},\ \ s,\tau>0,
\end{equation}
Let $T(t)$ be the solution operator of the classical Cauchy problem (\ref{classical}) and define an operator-valued function $S(t)$ by (\ref{sub}). Application of the Laplace transform gives by using (\ref{phi})
$$
\int _0^\infty e^{-st}S(t)\, dt=\int_0^\infty\widehat{\ph}(s,\tau)T(\tau)\, d\tau=\frac{g(s)}{s}\int_0^\infty e^{-\tau g(s)}T(\tau)\, d\tau=\frac{g(s)}{s}R(g(s),A).
$$
Comparing this result to (\ref{H}), it follows by the uniqueness of the Laplace transform  that $S(t)$ is exactly the solution operator of (\ref{C}) if $g=g_1$, resp. (\ref{RL}) if $g=g_2$. In this way (\ref{sub}) is established. 

Let us prove now that $\ph(t,\tau)$ is a probability density function. Since $g(s)/s\in\CMF$ and $e^{-\tau g(s)}\in \CMF$ as a composition of the completely monotone in $x$ function $e^{-\tau x}$ and the Bernstein function $g(s)$, (\ref{phi}) implies that $\widehat{\ph}(s,\tau)\in\CMF$ as a product of two completely monotone functions (see Proposition 2.1(a),(b) and Theorem 3.1(d),(e)). Then Bernstein's theorem implies $\ph(t,\tau)\ge 0.$

The second identity in (\ref{12}) can be proven in various ways. Here we show that it is a particular case of (\ref{sub}). Indeed, in the trivial case $A\equiv 0$ the constant in $t$ function $u(t)\equiv a$ satisfies equations (\ref{C}), (\ref{RL}) and (\ref{classical}) and by the uniqueness of the solution it follows that this is the solution in this case, i.e.
$S(t)=T(t)=I$. Then (\ref{sub}) reduces to the desired identity.

 In \cite{K}, Section 4.2, it is proven that for $g(s)=g_1(s)$ in the case of continuous distribution the function (\ref{defphiint})
is well defined locally integrable function. In an analogous manner this can be done also for the continuous distribution variant of $g_2(s)$.
So, it remains to prove the last part of the theorem concerning the discrete distribution case. 
Define
\begin{equation}\label{theta}
\theta_0=\min\{(1/\al-1)\pi/2,\pi/2\}-\e,
\end{equation}
 where $\e>0$ is small enough, such that $\theta_0>0$. 
According to \cite{Pruss}, Theorem 0.1, it suffices to prove that the function $\widehat{\ph}(s,\tau)$ admits analytic extension to the sector $|\arg s|\le \pi/2+\theta_0$ and $s\widehat{\ph}(s,\tau)$ is bounded on each subsector $|\arg s|\le \pi/2+\theta,\ \theta<\theta_0$.  Theorem 3.1(f) states that $g(s)$ can be extended analytically to $\Si_\pi$, thus this holds also for the function $\widehat{\ph}(s,\tau)$. Take $s$ such that $|\arg s|\le \pi/2+\theta,\ \theta<\theta_0$. Then, applying again Theorem 3.1(f), it follows
$$
|\arg g(s)|\le\al |\arg s|<\pi/2-\al\e.
$$
Therefore, $g(s)=\rho e^{\mathrm{i}\phi}$, for some $\rho>0$, $|\phi|<\pi/2-\al\e$, and thus
$$
|s\widehat{\ph}(s,\tau)|=|g(s)e^{-\tau g(s)}|\le \rho e^{-\tau \rho \cos\phi}\le \rho e^{-a \rho }\le (ea)^{-1},
$$
where $a=\tau \sin{\al\e}>0$. 
With this the proof is completed.
\end{proof}

In the particular case of single term equation, such a subordination formula is given in \cite{Baj}, Theorem 3.1, where the function $\ph(t,\tau)$ is expressed in terms of a special function of Wright type.

In \cite{K} the subordination relation (\ref{sub}) is proven for the continuous distribution case of problem (\ref{C}) with $A$ being the Laplace operator.
  
Note that the subordination identity (\ref{sub}) shows again that in ordered Banach space positivity of $T(t)$ implies positivity of $S(t)$ .

\section{Conclusion}
An approach is proposed for the study of the distributed order fractional evolution equations in the Caputo and the Riemann-Liouville sense, rewriting them as abstract Volterra integral equations. The obtained results are based mainly on the properties of the kernels of these integral equations, and especially those related to complete monotonicity.

The results proven in this paper hold for a large class of fractional evolution equations, involving several types of fractional differentiation of order less than one as well as various possibilities for the operator $A$: e.g. the Laplace operator, general second order symmetric uniformly elliptic operators, operators leading to the so-called time-space fractional equations, such as: 
 space-fractional derivatives (e.g. in the Riesz sense), 
fractional powers of the multi-dimensional Laplace operator, other forms of fractional Laplacian (see e.g. \cite{Y2012, Y2010}), fractional powers of more general elliptic operators, etc. 

The developed technique is also applicable to other related problems, for example the Rayleigh-Stokes problem for the generalized second grade fluid with fractional derivative model, see e.g. \cite{BB, RS}, or to more general abstract Volterra integral equations with kernel $k(t)$, which Laplace transform $\widehat{k}(s)$ is well-defined for $s>0$ and is such that $(\widehat{k}(s))^{-1}$ is a Bernstein function.

\section{Acknowledgments}
The author is partially supported by Grant DFNI-I02/9/12.12.2014
from the Bulgarian National Science Fund and the Bilateral Research
Project "Mathematical modelling by means of integral transform methods, partial differential equations, special and
generalized functions" between BAS and SANU.


\begin{thebibliography}{99} 








\bibitem{Laplace}
Arendt, W.;  Batty, C.J.K.; Hieber, M.;  Neubrander, F. {\em Vector-valued Laplace Transforms and Cauchy Problems}, Birkh\"auser: Basel, 2011.


\bibitem{Baj} E. Bajlekova, { \em Fractional Evolution Equations in Banach spaces,} Ph.D. thesis, Eindhoven University of Technology,  The Netherlands (2001)\\ http://alexandria.tue.nl/extra2/200113270.pdf

\bibitem{BB}
E. Bazhlekova, I. Bazhlekov, Viscoelastic flows with fractional derivative models: computational approach via convolutional calculus of Dimovski. {\em Fract. Calc. Appl. Anal.}, 2014, vol.17, No.4, 954–976. 

\bibitem{RS} E. Bazhlekova, B. Jin, R. Lazarov, Z. Zhou, An analysis of the Rayleigh-Stokes problem for a generalized second-grade fluid. {\em Numerische Mathematik}, to appear DOI: 10.1007/s00211-014-0685-2 arXiv:1404.2953 (2014)






\bibitem{Chechkin}
A.V. Chechkin, R. Gorenflo, I.M. Sokolov, Retarding subdiffusion
and accelerating superdiffusion governed by distributed order fractional
diffusion equations. {\em Phys. Rev. E} 66 (2002), 1–7


\bibitem{VarshaJMAA} 
Daftardar-Gejji,V.; Bhalekar, S.
Boundary value problems for multi-term fractional differential equations.
 {\em J. Math. Anal. Appl.} {\bf 2008}, {\em 345}, No.2, pp. 754--765. 

\bibitem{Y2012}
Dalla Riva, M., S. Yakubovich. On a Riemann–Liouville fractional analog of the Laplace operator with positive energy. {\em Integral Transforms and Special Functions} 23.4 (2012): 277-295.

\bibitem{Engel}
 Engel, K.-J.; Nagel,R. {\em One-parameter semigroups for linear evolution equations.} Springer-Verlag: New York/Berlin/Heidelberg, 1999.


\bibitem{Feller}
W. Feller, {\em An introduction to probability theory and its applications}, vol.2, Willey, New York, 1971.

\bibitem{bookML}
Gorenflo, R., A.A. Kilbas, F. Mainardi, S. V. Rogosin. {\em Mittag-Leffler Functions: Related Topics and Applications.} Springer Monographs in Mathematics, 2014

\bibitem{Mirjana} 
 Gorenflo, R.;  Luchko, Yu.; Stojanovic,  M. Fundamental
solution of a distributed order time-fractional diffusion-wave
equation as probability density. {\em Fract. Calc. Appl. Anal.} {\bf 2013}, {\em 16}, No.2, pp. 297–316.


\bibitem{Guo}
Guo, G., Li, K., Wang, Y.
Exact solutions of a modified fractional diffusion equation in the finite and semi-infinite domains
{\em Physica A: Statistical Mechanics and its Applications} {\bf 2015}, {\em 417}, pp. 193-201. 


\bibitem{Burrage} Jiang, H., Liu, F., Turner, I., Burrage, K.  Analytical solutions
for the multi-term time-space Caputo-Riesz fractional advection-diffusion
equations on a finite domain. {\em Journal of Mathematical Analysis and Applications}, (2012)
389(2), pp. 1117-1127.


\bibitem{Jin}
Jin, B.; Lazarov, R.; Liu, Y.; Zhou, Z. The Galerkin finite element method for a multi-term time-fractionl diffusion equation. {\em Journal of Computational Physics} {\bf 2015}, {\em 281}, pp. 825--843.



\bibitem{KST} Kilbas, A.A.;  Srivastava, H.M.;  Trujillo, J.J. {\em Theory and applications of fractional differential equations}; North-Holland Mathematics studies, Elsevier: Amsterdam, 2006.


\bibitem{K}
Kochubei, A.N. Distributed order calculus and equations of ultraslow diffusion. {\em J. Math. Anal. Appl.} {\bf 2008}, {\em 340}, pp. 252-281.

\bibitem{Langlands}
Langlands, T.A.M., 2006, Solution of a modified fractional diffusion equation,
{\em Physica A} {\bf 367}, pp. 136-144.

\bibitem{KosticFCAA}
Li, C.-G.; Kosti\'c, M.; Li, M.; Piskarev, S. On a class of time-fractional differential equations. {\em Fract. Calc. Appl. Anal.} {\bf 2012}, {\em 15}, No.4, pp. 639-668. 





\bibitem{Yamamoto}
Li, Z.;  Liu, Y.; Yamamoto, M. Initial-boundary value problems for multi-term time-fractional diffusion equations with positive constant coefficients, {\em Appl. Math. Comput. } in press http://dx.doi.org/10.1016/j.amc.2014.11.073 {\em arXiv:1312.2112} (2014).

\bibitem{LuchkoFCAA}
Li, Z.;  Luchko, Yu.; Yamamoto, M. Asymptotic estimates of solutions to initial-boundary-value problems for distributed order time-fractional diffusion equations. {\em Fract. Calc. Appl. Anal.} {\bf 2014}, {\em 17}, No.4, pp. 1114-1136.




\bibitem{LuchkoMultiterm}
 Luchko, Yu. Initial-boundary-value problems for the generalized multiterm
time-fractional diffusion equation. {\em J. Math. Anal. Appl.} {\bf 2011}, {\em 374,} No.2, pp. 538-548.


\bibitem{MainardiRel}
Mainardi, F., Mura, A., Gorenflo, R., Stojanovi\'{c}, M.
The two forms of fractional relaxation of distributed order
(2007) {\em JVC/Journal of Vibration and Control}, 13, pp. 1249-1268. 



\bibitem{MainardiDiff}
Mainardi, F., Mura, A., Pagnini, G., Gorenflo, R.
Time-fractional diffusion of distributed order
(2008) {\em JVC/Journal of Vibration and Control}, 14, pp. 1267-1290. 

\bibitem{MillerITSF}
K.S. Miller, The Mittag-Leffler and related functions, {\em Integr. Transf. Spec. Funct.} 1 (1993), 41-49.

\bibitem{Miller}
K.S. Miller, A note on the complete monotonicity of the generalized Mittag-Leffler Function, {\em Real Anal. Exchange} Volume 23, Number 2 (1999), 753-756.



\bibitem{MillerSamko}
K.S. Miller,  S.G. Samko, Completely monotonic functions. {\em Integr. Transf. Spec. Funct.}
2001, Vol. 12, No 4, 389-402



\bibitem{Pruss}
Pr\"uss, J. {\em Evolutionary integral equations and applications}. Birkh\"auser: Basel, Boston, Berlin, 1993.

\bibitem{Pagnini}
Saxena R.K., Pagnini G.,
Exact solutions of triple-order time-fractional differential equations for anomalous relaxation and diffusion I: the accelerating case.
{\em Physica A} 390, 602-613 (2011)

\bibitem{Sokolov}
Sokolov, I.M.; Chechkin, A.V.; Klafter, J.
Distributed-order fractional kinetics.
 {\em Acta Physica Polonica B} {\bf 2004}, {\em 35}, No.4, pp. 1323-1341.
  

\bibitem{Todorov}
Todorov, P. New explicit formulas for the nth derivative of composite functions. {\em Pacific Journal of Mathematics} {\bf 1981}, {\em 92}, No. 1, pp. 217-236.


\bibitem{Y2010}
S. Yakubovich, Eigenfunctions and Fundamental Solutions of the Fractional Two-Parameter Laplacian, {\em International Journal of Mathematics and Mathematical Sciences}, vol. 2010, Article ID 541934




\end{thebibliography}


\end{document}